
\documentclass{article}

\usepackage{amsmath}
\usepackage{amssymb}
\usepackage{amsfonts}
\usepackage{amsthm}
\usepackage{color}
\usepackage{graphicx}
\usepackage{hyperref}
\usepackage{mathabx}
\usepackage{tikz}
\usetikzlibrary{calc}
\theoremstyle{plain}
\newtheorem{thm}{Theorem}[section]
\newtheorem{lem}[thm]{Lemma}
\newtheorem{prop}[thm]{Proposition}

\newtheorem{cor}[thm]{Corollory}
\theoremstyle{definition}
\newtheorem*{defn}{Definition}
\theoremstyle{remark}
\newtheorem{remark}[thm]{Remark}

\def\mod{\mathrm{mod}\ }
\def\gcd{\mathrm{gcd}}
\def\lcm{\mathrm{lcm}}
\def\PRIMES{\mathrm{PRIMES}}
\makeatletter 
\@addtoreset{equation}{section}
\makeatother  


\begin{document}

\title {\bf The largest cycles consist by the quadratic residues and Fermat primes}
\author{\it Haifeng Xu\thanks{Project supported by NSFC(Grant No. 11401515), the University Science Research Project of Jiangsu Province (14KJB110027) and the Foundation of Yangzhou University 2014CXJ004.}\\ {\small Yangzhou University}}
\date{\small\today}

\maketitle

\begin{abstract}
This paper studies the largest cycles consisted by the quadratic residues modulo prime numbers. We give some formulae about the maximum length of the cycles. Especially, the formula for modulo Fermat primes is given.
\end{abstract}

\noindent{\bf MSC2010:} 11A07.\\
{\bf Keywords:} quadratic residues, Fermat primes, largest cycles.



\section{Examples and Definition}

We find this phenomenon by computation. So let us begin with some examples. Let's consider quadratic residues module $999$ first.

\[
\begin{cases}
454^2\equiv 322\ (\mod 999),\\
322^2\equiv 787\ (\mod 999),\\
787^2\equiv 988\ (\mod 999),\\
988^2\equiv 121\ (\mod 999),\\
121^2\equiv 655\ (\mod 999),\\
655^2\equiv 454\ (\mod 999),\\
\end{cases}
\qquad
\begin{cases}
445^2\equiv 223\ (\mod 999),\\
223^2\equiv 778\ (\mod 999),\\
778^2\equiv 889\ (\mod 999),\\
889^2\equiv 112\ (\mod 999),\\
112^2\equiv 556\ (\mod 999),\\
556^2\equiv 445\ (\mod 999).\\
\end{cases}
\]
We find that these numbers $\{454,322,787,988,121,655\}$ consist a cycle, also for the numbers $\{445,223,778,889,112,556\}$. The length of the cycle is $6$. In fact it is the maximum number of the elements in the cycles. We called it the length of the largest cycles for the quadratic residues of $999$. Look another example for modulus $99$, we get
\[
\begin{cases}
22^2\equiv 88\ (\mod 99),\\
88^2\equiv 22\ (\mod 99),\\
\end{cases}
\qquad
\begin{cases}
70^2\equiv 49\ (\mod 99),\\
49^2\equiv 25\ (\mod 99),\\
25^2\equiv 31\ (\mod 99),\\
31^2\equiv 70\ (\mod 99).\\
\end{cases}
\]
We see that, there exist a smaller cycle. We are interesting in the largest cycles.

\begin{defn}
Consider the equation $x^2\equiv a\ (\mod m)$. If there exist a series of numbers $\{x_i\}_{i=1}^{k}$ such that
\[
\begin{cases}
x_1^2 &\equiv x_2\ (\mod m),\\
x_2^2 &\equiv x_3\ (\mod m),\\
&\cdots\\
x_{k-1}^2 &\equiv x_k\ (\mod m),\\
x_k^2 &\equiv x_1\ (\mod m),\\
\end{cases}
\]
then we call these $k$ numbers consist a cycle modulo $m$. The number $k$ is defined as the length of the cycle.
\end{defn}

It infers that
\[
x_i^{2^k}\equiv x_i\ (\mod m),\quad\text{for}\ i=1,2,\ldots,k.
\]
There exists at least one largest cycle. We denote the length of it by $L(m)$. For example, $L(99)=4$, $L(999)=6$. In the next section, we try to find out the formula for $L(m)$.


\section{Computations}

The maximum length of the cycles modulo prime numbers are list as follows:

\medskip

\begin{tabular}{|c|r|r|r|r|r|r|r|r|r|r|}
\hline
$m=p$ & 2 & 3 & 5 & 7 & 11 & 13 & 17 & 19 & 23 & 29 \\
\hline
$L(p)$ & 1 & 1 & 1 & 2 &  4 &  2 &  1 &  6 & 10 &  3 \\
\hline
\hline
$m=p$ & 31 & 37 & 41 & 43 & 47 & 53 & 59 & 61 & 67 & 71 \\
\hline
$L(p)$ & 4 &  6 & 4 & 6 & 11 & 12 &  28 &  4 &  10 &  12 \\
\hline
\hline
$m=p$ & 73 & 79 & 83 & 89 & 97 & 101 & 103 & 107 & 109 & 113 \\
\hline
$L(p)$ & 6 &  12 &  20 &  10 & 2 & 20 & 8 & 52 &  18 &  3 \\
\hline
\hline
$m=p$ & 127 & 131 & 137 & 139 & 149 & 151 & 157 & 163 & 167 & $\ldots$ \\
\hline
$L(p)$ &  6 &  12 & 8 & 22 &  36 & 20 & 12 & 54 & 82 & $\ldots$ \\
\hline
\end{tabular}

\bigskip

Obviously, it establishes a map from the set of primes $\PRIMES$ to the set of positive numbers $\mathbb{Z}^{+}$:
\[
\begin{array}{rcl}
L:\ \PRIMES &\rightarrow &\mathbb{Z}^{+},\\
    p & \mapsto & L(p).\\
\end{array}
\]

\bigskip

In fact, E. L. Blanton, Jr., S. P. Hurd and J. S. McCranie \cite{Blanton-Hurd-McCranie} had considered this question. We can find this sequence on the site of OEIS \cite{OEIS-A037178}.

For composites, we have

\medskip

\begin{tabular}{|c|r|r|r|r|r|r|r|r|r|r|r|r|}
\hline
$m=c$ & 4 & 6 & 8 & 9 & 10 & 12 & 14 & 15 & 16 & 18 & 20 & 21 \\
\hline
$L(c)$ & 1 & 1 & 1 & 2 & 1 & 1 & 2 & 1 & 1 & 2 & 1 & 2 \\
\hline
\hline
$m=c$ & 22 & 24 & 25 & 26 & 27 & 28 & 30 & 32 & 33 & 34 & 35 & 36 \\
\hline
$L(c)$ & 4 & 1 & 4 & 2 & 6 & 2 & 1 & 1 & 4 & 1 & 2 & 2 \\
\hline
\hline
$m=c$ & 38 & 39 & 40 & 42 & 44 & 45 & 46 & 48 & 49 & 50 & 51 & $\ldots$ \\
\hline
$L(c)$ & 6 & 2 & 1 & 2 & 4 & 2 &  10 & 1 & 6 & 4 &  1 & $\ldots$ \\
\hline
\end{tabular}

\bigskip

And when modulo $p^2$, we have

\medskip

\begin{tabular}{|c|r|r|r|r|r|r|r|r|r|r|}
\hline
$m=p^2$ & $2^2$ & $3^2$ & $5^2$ & $7^2$ & $11^2$ & $13^2$ & $17^2$ & $19^2$ & $23^2$ & $29^2$ \\
\hline
$L(p^2)$ & 1 & 2 & 4 & 6 & 20 & 12 & 8 & 18 & 110 & 84 \\
\hline
\hline
$m=p^2$ & $31^2$ & $37^2$ & $41^2$ & $43^2$ & $47^2$ & $53^2$ & $59^2$ & $61^2$ & $67^2$ & $71^2$ \\
\hline
$L(p^2)$ & 20 & 36 & 20 & 42 & 253 & 156 & 812 & 60 & 330 & 420 \\
\hline
\hline
$m=p^2$ & $73^2$ & $79^2$ & $83^2$ & $89^2$ & $97^2$ & $101^2$ & $103^2$ & $107^2$ & $109^2$ & $113^2$ \\
\hline
$L(p^2)$ & 18 & 156 & 820 & 110 & 48 & 100 & 408 & 2756 & 36 & 84 \\
\hline
\hline
$m=p^2$ & $127^2$ & $131^2$ & $137^2$ & $139^2$ & $149^2$ & $151^2$ & $157^2$ & $163^2$ & $167^2$ & $\ldots$ \\
\hline
$L(p^2)$ & 42 & 780 & 136 & 1518 & 1332 & 60 & 156 & 162 & 6806 & $\ldots$ \\
\hline
\end{tabular}

\bigskip

And consider modulo the powers of prime numbers. For examples:
\medskip

\begin{tabular}{|c|r|r|r|r|r|r|}
\hline
$m=p^n$  & 4 & 8  & 16 & 32  & 64 &  $\ldots$ \\
\hline
$L(p^n)$ & 1 & 1  & 1  & 1   & 1   & $\ldots$  \\
\hline
\hline
$m=p^n$ & $3^2$ & $3^3$ & $3^4$ & $3^5$ & $3^6$  & $\ldots$ \\
\hline
$L(p^n)$ & 2 & $2\times 3$ & $3\times 6$ & $6\times 9$ & $9\times 18$ & $\ldots$ \\
\hline
\hline
$m=p^n$ & $5^2$ & $5^3$ & $5^4$ & $5^5$ & $5^6$  & $\ldots$ \\
\hline
$L(p^n)$ & 4 & $4\times 5$ & $5\times 20$ & $20\times 25$ & $25\times 100$ & $\ldots$ \\
\hline
\hline
$m=p^n$ & $7^2$ & $7^3$ & $7^4$ & $7^5$ & $7^6$  & $\ldots$ \\
\hline
$L(p^n)$ & 6 & $6\times 7$ & $7\times 42$ & $42\times 49$ & $\ldots$ & $\ldots$ \\
\hline
\hline
$m=p^n$ & $11^2$ & $11^3$ & $11^4$ & $11^5$ & $11^6$  & $\ldots$ \\
\hline
$L(p^n)$ & $20$ & $20\times 11$ & $220\times 11$ & $26620$ & $\ldots$ & $\ldots$ \\
\hline
\hline
$m=p^n$ & $13^2$ & $13^3$ & $13^4$ & $13^5$ & $13^6$  & $\ldots$ \\
\hline
$L(p^n)$ & $12$ & $12\times 13$ & $13\times 156$ & $\ldots$ & $\ldots$ & $\ldots$ \\
\hline
\hline
$m=p^n$ & $17^2$ & $17^3$ & $17^4$ & $17^5$ & $17^6$  & $\ldots$ \\
\hline
$L(p^n)$ & $8$ & $8\times 17$ & $17\times 136$ & $\ldots$ & $\ldots$ & $\ldots$ \\
\hline
\hline
$m=p^n$ & $19^2$ & $19^3$ & $19^4$ & $19^5$ & $19^6$  & $\ldots$ \\
\hline
$L(p^n)$ & $18$ & $18\times 19$ & $342\times 19$ & $\ldots$ & $\ldots$ & $\ldots$ \\
\hline
\hline
$m=p^n$ & $23^2$ & $23^3$ & $23^4$ & $23^5$ & $23^6$  & $\ldots$ \\
\hline
$L(p^n)$ & $110$ & $110\times 23$ & $\ldots$ & $\ldots$ & $\ldots$ & $\ldots$ \\
\hline
\hline
$m=p^n$ & $29^2$ & $29^3$ & $29^4$ & $29^5$ & $29^6$  & $\ldots$ \\
\hline
$L(p^n)$ & $84$ & $84\times 29$ & $\ldots$ & $\ldots$ & $\ldots$ & $\ldots$ \\
\hline
\hline
$m=p^n$ & $31^2$ & $31^3$ & $31^4$ & $31^5$ & $31^6$  & $\ldots$ \\
\hline
$L(p^n)$ & $20$ & $20\times 31$ & $\ldots$ & $\ldots$ & $\ldots$ & $\ldots$ \\
\hline
\hline
$m=p^n$ & $47^2$ & $47^3$ & $47^4$ & $47^5$ & $47^6$  & $\ldots$ \\
\hline
$L(p^n)$ & $253$ & $253\times 47$ & $\ldots$ & $\ldots$ & $\ldots$ & $\ldots$ \\
\hline
\end{tabular}

\bigskip

The data listed in these tables are all verified by computations.

$L(p^2)$ is relevant to $p-1$ and $L(p)$. It is rather complicated and we discuss it later. But for few exceptions we have the explicit formula. For example, the Fermat primes (Proposition \ref{prop:1} and \ref{prop:2}).

For the first five Fermat primes $F_0=3$, $F_1=5$, $F_2=17$, $F_3=257$, $F_4=65537$, we have $L(F_i)=1$, $i=0,1,2,3,4$.

For $p^2=11^2$, we have $L(121)=20$. The largest cycle $\{x_1,x_2,\ldots,x_{20}\}$ is
\[
4, 16, 14, 75, 59, 93, 58, 97, 92, 115, 36, 86, 15, 104, 47, 31, 114, 49, 102, 119.
\]
Of course, there are also cycles with length equal to $10$. For example, one of them is
\[
100, 78, 34, 67, 12, 23, 45, 89, 56, 111.
\]

For $p^2=17^2$, we have $L(289)=8=\frac{1}{2}(17-1)$. For example, one of the cycles is
\[
256, 222, 154, 18, 35, 69, 137, 273.
\]

For $p^2=257^2$, we have $L(66049)=16=\frac{1}{2^4}(257-1)$. For example, one of cycles is
\[
\begin{array}{rrrrrrrr}
65536, & 65022, & 63994, & 61938, & 57826, & 49602, & 33154, & 258,\\
 515,  & 1029,  & 2057,  & 4113,  & 8225,  & 16449, & 32897, & 65793.\\
\end{array}
\]

For $p^2=65537^2$, we have $L(4295098369)=32=\frac{1}{2^{11}}(65537-1)$. For example, one of the cycles is
\[
\begin{array}{rrrr}
4294967296, & 4294836222, & 4294574074, & 4294049778, \\
4293001186, & 4290904002, & 4286709634, & 4278320898, \\
4261543426, & 4227988482, & 4160878594, & 4026658818, \\
3758219266, & 3221340162, & 2147581954, & 65538, \\
131075, & 262149, & 524297, & 1048593, \\
2097185, & 4194369, & 8388737, & 16777473, \\
33554945, & 67109889, & 134219777, & 268439553, \\
536879105, & 1073758209, & 2147516417, & 4295032833 \\
\end{array}
\]

Note that
\[
17=2^{2^2}+1, \quad 257=2^{2^3}+1,\quad 65537=2^{2^4}+1.
\]
They are the Fermat numbers. For $k=0,1,2,3,4$, $2^{2^k}+1=3,5,17,257,65537$ are all prime numbers. But $2^{32}+1=4294967297$ is not a prime. It equals $641 \times 6700417$. It is conjectured that there are only 5 terms. Currently it has been shown that $2^{2^k} + 1$ is composite for $5\leqslant k\leqslant 32$ \cite{FermatPrime}, \cite{OEIS-A019434}.

\section{Some lemmas}

\begin{defn}[\cite{bakkerbe}]
Let $n\in\mathbb{N}$. A primitive root mod $n$ is a residue class $\alpha\in(\mathbb{Z}/n\mathbb{Z})^*$ with maximal order, i.e., $\mathrm{ord}(\alpha)=\varphi(n)$.
\end{defn}

\begin{lem}\label{lem:Euler-totient-function}
The Euler's totient function $\varphi(m)$ have the following formula for $m=p^n$:
\[
\varphi(p^n)=(p-1)p^{n-1}.
\]
\end{lem}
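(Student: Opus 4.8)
The plan is to compute $\varphi(p^n)$ directly from its definition as the number of integers in the range $\{1,2,\ldots,p^n\}$ that are coprime to $p^n$. First I would recall that, since $p$ is prime, the only prime dividing $p^n$ is $p$ itself; hence for any integer $k$ we have $\gcd(k,p^n)>1$ if and only if $p\mid k$. This reduces the whole problem to counting, among the $p^n$ residues, exactly how many are divisible by $p$.

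The key step is therefore to enumerate the multiples of $p$ inside $\{1,2,\ldots,p^n\}$. These are precisely
\[
p,\ 2p,\ 3p,\ \ldots,\ p^{n-1}\cdot p = p^n,
\]
and there are exactly $p^{n-1}$ of them. Every other residue in the range is coprime to $p^n$, so by complementary counting I would conclude
\[
\varphi(p^n) = p^n - p^{n-1} = p^{n-1}(p-1) = (p-1)p^{n-1},
\]
which is the claimed formula.

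As this is a classical and elementary fact, there is no genuine obstacle to the argument; the only point requiring care is the justification that coprimality to $p^n$ collapses to mere non-divisibility by $p$, which rests on $p$ being prime. One could alternatively derive the result from the multiplicativity of $\varphi$ together with $\varphi(p)=p-1$, but the direct counting argument above is the shortest and most self-contained, and it is the route I would take.
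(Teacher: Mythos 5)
Your proof is correct: counting the $p^{n-1}$ multiples of $p$ in $\{1,\ldots,p^n\}$ and subtracting is the standard derivation, and you rightly flag that the reduction of coprimality to non-divisibility by $p$ is where primality is used. The paper states this lemma without proof, so your self-contained counting argument supplies exactly what is needed.
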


Euler's totient function is a multiplicative function, meaning that if two numbers $m$ and $n$ are coprime, then $\varphi(mn) = \varphi(m) \varphi(n)$.

\begin{lem}[Euler's Criterion]
Let $p$ be an odd prime and $a$ not divisible by $p$. Then $a$ is a quadratic residue modulo $p$ if and only if
\[
a^{(p-1)/2}\equiv 1\ (\mod p).
\]
\end{lem}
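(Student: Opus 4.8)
The plan is to combine Fermat's Little Theorem with the group structure of $(\mathbb{Z}/p\mathbb{Z})^*$. First I would note that since $p\nmid a$, Fermat's Little Theorem gives $a^{p-1}\equiv 1\ (\mod p)$, so
\[
0\equiv a^{p-1}-1\equiv\left(a^{(p-1)/2}-1\right)\left(a^{(p-1)/2}+1\right)\ (\mod p).
\]
Because $p$ is prime, $\mathbb{Z}/p\mathbb{Z}$ is a field (hence an integral domain), so one of the two factors vanishes and $a^{(p-1)/2}\equiv\pm 1\ (\mod p)$. This turns the criterion into a genuine dichotomy and reduces everything to deciding when the sign is $+$. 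The forward implication is then immediate: if $a\equiv b^2\ (\mod p)$ with $p\nmid b$, then $a^{(p-1)/2}\equiv b^{p-1}\equiv 1\ (\mod p)$, again by Fermat's Little Theorem.

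For the converse I would invoke a primitive root $g$ modulo $p$, whose existence and defining property $\mathrm{ord}(g)=\varphi(p)=p-1$ were just recalled. Writing $a\equiv g^{k}\ (\mod p)$, the hypothesis $a^{(p-1)/2}\equiv 1$ becomes $g^{k(p-1)/2}\equiv 1\ (\mod p)$, which forces $p-1\mid k(p-1)/2$, i.e.\ $k$ is even. Then $a\equiv\left(g^{k/2}\right)^{2}\ (\mod p)$ exhibits $a$ as a square, so $a$ is a quadratic residue.

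The only genuine obstacle is this converse, since it is exactly here that one must use more than Fermat's Little Theorem about the arithmetic of $(\mathbb{Z}/p\mathbb{Z})^*$. The primitive-root route keeps it short; if one prefers to avoid primitive roots, the same conclusion follows by counting: the $(p-1)/2$ distinct quadratic residues are all roots of $x^{(p-1)/2}-1$ over the field $\mathbb{Z}/p\mathbb{Z}$, and since a polynomial of degree $(p-1)/2$ over a field has at most $(p-1)/2$ roots, these account for every solution of $a^{(p-1)/2}\equiv 1$, so any such $a$ is necessarily a quadratic residue.
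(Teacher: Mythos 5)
Your proof is correct, and it is exactly the argument the paper gestures at: the paper offers no actual proof of this lemma, only the one-line remark that ``the proof uses the fact that the residue classes modulo a prime number are a field,'' which is precisely where your integral-domain dichotomy and your root-counting bound for $x^{(p-1)/2}-1$ come from. Both of your routes for the converse (primitive roots, or counting the $(p-1)/2$ quadratic residues against the degree bound) are standard and complete, so nothing is missing.
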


The proof uses the fact that the residue classes modulo a prime number are a field.

\begin{lem}[\cite{K-L-S_2001}, \cite{Tsang}]\label{lem:primitive-root}
The set of all quadratic nonresidues of a Fermat prime is equal to the set of all its primitive roots.
\end{lem}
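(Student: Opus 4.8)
The plan is to exploit the defining feature of a Fermat prime $p=2^{2^k}+1$, namely that $p-1=2^{2^k}$ is a power of $2$. Since $p$ is prime, the group $(\mathbb{Z}/p\mathbb{Z})^*$ is cyclic of order $p-1$, and I will translate both ``quadratic nonresidue'' and ``primitive root'' into a single arithmetic condition on the exponent relative to a fixed generator, then observe that the two conditions coincide.

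First I would fix a generator (primitive root) $g$ of $(\mathbb{Z}/p\mathbb{Z})^*$ and write an arbitrary unit as $a=g^j$ with $0\le j<p-1$. The order of $g^j$ equals $(p-1)/\gcd(j,p-1)$. Because $p-1=2^{2^k}$ is a power of $2$, we have $\gcd(j,p-1)=1$ if and only if $j$ is odd; hence $g^j$ is a primitive root exactly when $j$ is odd. On the other hand, $g^j$ is a quadratic residue precisely when $j$ is even, so $g^j$ is a quadratic nonresidue exactly when $j$ is odd. Comparing the two conditions shows that the set of nonresidues and the set of primitive roots are identical.

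Alternatively, and perhaps more in keeping with the lemmas already stated, I would argue through Euler's criterion. By that criterion, $a$ is a quadratic nonresidue iff $a^{(p-1)/2}\equiv -1\ (\mod p)$, i.e.\ iff the order of $a$ does not divide $(p-1)/2=2^{2^k-1}$. Since the order of $a$ divides $p-1=2^{2^k}$, it is itself a power of $2$; and a power of $2$ that divides $2^{2^k}$ but not $2^{2^k-1}$ must equal $2^{2^k}=p-1$. Thus $a$ is a nonresidue iff its order is $p-1$, which is exactly the condition for $a$ to be a primitive root. This gives the desired equality of the two sets.

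I do not anticipate a serious obstacle here: the whole argument rests on the single observation that $p-1$ is a $2$-power, which forces ``coprime to $p-1$'' and ``odd'' to coincide and thereby collapses the two notions. The only point requiring a little care is the bookkeeping of the equivalences, namely passing from ``order does not divide $(p-1)/2$'' to ``order $=p-1$''; this step is immediate once $(\mathbb{Z}/p\mathbb{Z})^*$ is recognized as a cyclic $2$-group, but it is essential and is precisely where the Fermat-prime hypothesis is used.
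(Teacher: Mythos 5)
Your proposal is correct, and your second argument via Euler's criterion is essentially the paper's own proof: the paper likewise notes that a nonresidue $a$ has order a power of $2$ dividing $2^{2^k}$, and that $a^{(p-1)/2}\equiv -1$ rules out any order dividing $2^{2^k-1}$, forcing the order to be $p-1$. Your first variant with a fixed generator is just a reindexed form of the same observation; if anything you are slightly more careful than the paper, which only argues the inclusion of nonresidues into primitive roots and leaves the (easy, and not Fermat-specific) converse implicit.
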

\begin{proof}
Let $a$ be a quadratic nonresidue of the Fermat prime $F_n$, and let $e=\mathrm{ord}_{F_n}a$. According to Fermat's little theorem, $a^{F_n-1}\equiv 1\ (\mod F_n)$. So $e | F_n-1=2^{2^n}$. It follows that $e=2^k$ for some nonnegative $k\leqslant 2^n$. On the other hand, by Euler's criterion,
\[
a^{(F_n-1)/2}=2^{2^{2^n-1}}\equiv -1\ (\mod F_n).
\]
Hence, if $k < 2^n$, then $2^k | 2^{2^n-1}$ and so $a^{2^{2^n-1}}\equiv 1\ (\mod F_n)$, which is a contradiction. So, $k=2^n$ and $\mathrm{ord}_{F_n}a=2^{2^n}$. Therefore, $a$ is a primitive root modulo $F_n$.
\end{proof}

\begin{lem}[\cite{bakkerbe}]\label{lem:bakkerbe}
Let $n\in\mathbb{N}$, $a\in\mathbb{Z}$ coprime to $n$. Consider the equation
\[
x^d\equiv a\ (\mod n)
\]
$\mathrm{(a)}$ It has a solution if and only if $a^{\varphi(n)/\mathrm{gcd}(d,\varphi(n))}\equiv 1\ (\mod n)$.\\
$\mathrm{(b)}$ If it has a solution, it has exactly $\mathrm{gcd}(d,\varphi(n))$ solutions modulo $n$.\\
$\mathrm{(c)}$ If $x$ is a solution to this equation, then any other solution $x'$ satisfies $x'\equiv yx\ (\mod n)$ for a unique solution $y\ \mod n$ to $y^d\equiv 1\ (\mod n)$.
\end{lem}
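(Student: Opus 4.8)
The plan is to pass from the multiplicative equation to an additive one by taking discrete logarithms with respect to a primitive root, thereby reducing everything to a single linear congruence whose solvability and solution count are elementary. Throughout I abbreviate $\varphi=\varphi(n)$ and $e=\gcd(d,\varphi)$. The stated formulas are exactly the cyclic-group facts, so I would work in the setting where $(\mathbb{Z}/n\mathbb{Z})^*$ is cyclic and fix a primitive root $g$ (such a $g$ exists for the prime and odd prime-power moduli that this paper actually uses).

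Since $a$ is coprime to $n$, I write $a\equiv g^{b}\ (\mod n)$ for an integer $b$ well defined modulo $\varphi$, and I write each candidate unit as $x\equiv g^{t}\ (\mod n)$. Because $t\mapsto g^{t}$ is a bijection from $\mathbb{Z}/\varphi\mathbb{Z}$ onto the units mod $n$, the equation $x^{d}\equiv a\ (\mod n)$ is equivalent to the linear congruence $dt\equiv b\ (\mod\varphi)$ in the unknown $t$. This single reformulation carries the whole proof; the three parts are then just standard statements about this congruence read back through the bijection.

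For part (a) I would use that $dt\equiv b\ (\mod\varphi)$ is solvable if and only if $e\mid b$, and then translate this divisibility into the criterion as stated: since $a^{\varphi/e}\equiv g^{b\varphi/e}\ (\mod n)$ and $g$ has order $\varphi$, one has $a^{\varphi/e}\equiv 1\ (\mod n)$ precisely when $\varphi\mid b(\varphi/e)$, which is exactly $e\mid b$. For part (b), once a solution exists the linear congruence has exactly $e=\gcd(d,\varphi)$ incongruent solutions $t$ modulo $\varphi$, and the bijection transports these to exactly $\gcd(d,\varphi)$ distinct residues $x$ modulo $n$. For part (c), given one solution $x_{0}$ and any other solution $x$, the quotient $y:=xx_{0}^{-1}$ satisfies $y^{d}\equiv x^{d}(x_{0}^{d})^{-1}\equiv aa^{-1}\equiv 1\ (\mod n)$; conversely every solution $y$ of $y^{d}\equiv 1$ produces a solution $yx_{0}$, and since $x_{0}$ is invertible the map $x\mapsto y$ is a bijection onto the solutions of $y^{d}\equiv 1$, which gives the asserted unique $y$.

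The main obstacle I anticipate is conceptual rather than computational: the clean count in (b) and the one-congruence reduction genuinely need a primitive root, and for general $n$ the group $(\mathbb{Z}/n\mathbb{Z})^*$ need not be cyclic. For instance $n=8$ gives four solutions of $x^{2}\equiv 1$ rather than the $\gcd(2,\varphi(8))=2$ predicted by the formula, so the hypothesis (or the cited source) must be read as restricting to moduli admitting a primitive root. If one wanted the fully general statement I would instead invoke the Chinese Remainder Theorem to split $(\mathbb{Z}/n\mathbb{Z})^*$ into its cyclic factors and track $\gcd(d,\cdot)$ against each factor's order separately, but that refinement is not needed for the applications here.
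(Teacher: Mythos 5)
Your proof is correct, and it is essentially the only reasonable route: the paper itself gives no proof of this lemma, merely citing Bakker's notes on primitive roots, and the discrete-logarithm reduction to the linear congruence $dt\equiv b\ (\mathrm{mod}\ \varphi(n))$ is exactly the argument that source is organized around. All three parts check out: the translation of $a^{\varphi/e}\equiv 1$ into $e\mid b$, the count of $e$ solutions of the linear congruence, and the coset argument in (c) (which, as you implicitly use, does not even need cyclicity).

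Your closing caveat is more than a stylistic remark — it identifies a genuine error in the lemma as stated. For $n=8$ and $d=2$, $a=1$, the equation $x^2\equiv 1\ (\mathrm{mod}\ 8)$ has the four solutions $1,3,5,7$ while $\gcd(2,\varphi(8))=2$, so parts (a)–(b) fail for arbitrary $n\in\mathbb{N}$; the hypothesis must include that $(\mathbb{Z}/n\mathbb{Z})^*$ is cyclic (i.e., $n\in\{1,2,4,p^k,2p^k\}$ for odd primes $p$). Since every application in the paper takes $n$ to be a prime or an odd prime power, nothing downstream breaks, but the statement itself should carry the primitive-root hypothesis, and you were right to flag it.
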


On one extreme, this means if $\gcd(d,\varphi(n))=1$, then there is a unique $d$-th root $\mod n$ of any $a\in\mathbb{Z}$ coprime to $n$. On the other hand, it means that for any $d|\varphi(n)$, there are exactly $d$ solutions $\mod n$ to the equation $x^d\equiv 1\ (\mod n)$. This in fact characterizes the existence of primitive roots:

\begin{lem}\label{lem:bakkerbe2}
Let $n\in\mathbb{N}$. The following are equivalent:\\
$\mathrm{(a)}$ There is a primitive root $\mod n$.\\
$\mathrm{(b)}$ For all $d\in\mathbb{N}$ with $d|\varphi(n)$, there are exactly $d$ solutions $\mod n$ to the equation $x^d\equiv 1\ (\mod n)$.\\
$\mathrm{(c)}$ For all $d\in\mathbb{N}$ with $d|\varphi(n)$, there are exactly $\varphi(d)$ elements of $(\mathbb{Z}/n\mathbb{Z})^*$ of order $d$.
\end{lem}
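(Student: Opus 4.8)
\section*{Proof proposal}

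The plan is to recognize that all three conditions are purely group-theoretic statements about the finite abelian group $G=(\mathbb{Z}/n\mathbb{Z})^*$, whose order is $N:=\varphi(n)$: condition (a) says exactly that $G$ is cyclic, while (b) and (c) are counting conditions on $G$. I would first fix notation by letting $\psi(d)$ denote the number of elements of $G$ of order exactly $d$, so that $\sum_{d\mid N}\psi(d)=N$ because every element has some order dividing $N$. The key bookkeeping observation is that the solution set of $x^{d}\equiv 1\ (\mod n)$ is precisely the set of elements of $G$ whose order divides $d$, so the number of solutions equals $\sum_{e\mid d}\psi(e)$. With this in hand I would prove the cycle of implications (a)$\Rightarrow$(c)$\Rightarrow$(b)$\Rightarrow$(a).

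For (a)$\Rightarrow$(c), assuming $G=\langle g\rangle$ is cyclic of order $N$, I would invoke the standard fact that $g^{k}$ has order $N/\gcd(k,N)$; hence for $d\mid N$ the elements of order $d$ correspond to those $k\in\{0,1,\ldots,N-1\}$ with $\gcd(k,N)=N/d$, of which there are exactly $\varphi(d)$. Thus $\psi(d)=\varphi(d)$. For (c)$\Rightarrow$(b), I would simply combine the counting identity above with (c) and the classical arithmetic identity $\sum_{e\mid d}\varphi(e)=d$: for $d\mid N$ the number of solutions to $x^{d}\equiv 1$ is $\sum_{e\mid d}\psi(e)=\sum_{e\mid d}\varphi(e)=d$, as required. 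Both of these directions are routine once the dictionary between orders, solution counts, and the Euler function is set up.

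The main obstacle is (b)$\Rightarrow$(a), where I would run Gauss's counting argument. Fix $d\mid N$ and suppose $\psi(d)>0$, say $x\in G$ has order $d$. The cyclic subgroup $\langle x\rangle$ then has $d$ elements, all of which satisfy $y^{d}\equiv 1$; by hypothesis (b) there are exactly $d$ solutions to $x^{d}\equiv 1$, so the solution set coincides with $\langle x\rangle$. Since every element of order $d$ is a solution, it lies in $\langle x\rangle$, and the elements of order $d$ inside the cyclic group $\langle x\rangle$ are exactly its $\varphi(d)$ generators. Hence $\psi(d)=\varphi(d)$ whenever $\psi(d)>0$, and in all cases $\psi(d)\leqslant\varphi(d)$. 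Summing over $d\mid N$ gives $N=\sum_{d\mid N}\psi(d)\leqslant\sum_{d\mid N}\varphi(d)=N$, so equality holds throughout and $\psi(d)=\varphi(d)$ for every $d\mid N$; in particular $\psi(N)=\varphi(N)\geqslant 1$, so $G$ has an element of order $N$, i.e.\ a primitive root.

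The delicate point I expect to dwell on is precisely the step in (b)$\Rightarrow$(a) where the hypothesis ``exactly $d$ solutions'' is used to force the solution set to equal $\langle x\rangle$: it is this exactness that makes each nonempty order-class full-sized, and it is exactly what fails for moduli without primitive roots. For instance, modulo $8$ one has $\varphi(8)=4$ yet $x^{2}\equiv 1$ has the four solutions $1,3,5,7$ rather than $\gcd(2,4)=2$, so $(\mathbb{Z}/8\mathbb{Z})^{*}$ is not cyclic; citing such an example would clarify why (b) is genuinely restrictive. I would note as well that the argument is self-contained and does not even require the abelian hypothesis, but since $(\mathbb{Z}/n\mathbb{Z})^{*}$ is abelian there is no need to belabor that generality.
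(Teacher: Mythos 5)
Your proof is correct and complete. Note that the paper itself states Lemma \ref{lem:bakkerbe2} without any proof, quoting it as a known characterization of primitive roots from the cited lecture notes, so there is nothing to compare against; your cycle (a)$\Rightarrow$(c)$\Rightarrow$(b)$\Rightarrow$(a), with Gauss's counting argument $N=\sum_{d\mid N}\psi(d)\leqslant\sum_{d\mid N}\varphi(d)=N$ forcing $\psi(N)=\varphi(N)\geqslant 1$ in the step (b)$\Rightarrow$(a), is the standard and fully rigorous way to establish it, and your modulo-$8$ example correctly illustrates why (b) is a genuine restriction.
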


\begin{cor}\label{cor:1}
Given any odd prime number $p$. Let $d=\mathrm{gcd}\bigl(2^{p-1}-1,(p-1)p\bigr)$. Then there are exactly $d$ solutions mod $p^2$ to the equation
\[
x^d\equiv 1\ (\mod p^2).
\]
\end{cor}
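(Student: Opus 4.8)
The plan is to treat this as an immediate corollary of the primitive-root counting lemma (Lemma \ref{lem:bakkerbe2}): once one checks that the exponent $d$ is a divisor of $\varphi(p^2)$ and that $(\mathbb{Z}/p^2\mathbb{Z})^*$ admits a primitive root, the exact count of solutions falls out. So the work is almost entirely a matter of verifying hypotheses rather than computation.

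First I would pin down $\varphi(p^2)$: applying Lemma \ref{lem:Euler-totient-function} with $n=2$ gives $\varphi(p^2)=(p-1)p$, which is precisely the second entry of the gcd defining $d$. Next I would note that $d=\gcd(2^{p-1}-1,(p-1)p)$ divides $(p-1)p$ by the definition of the greatest common divisor, so $d\mid\varphi(p^2)$. It is worth emphasizing that the particular first argument $2^{p-1}-1$ is irrelevant to the count — all that is used is that $d$ divides $\varphi(p^2)$. Then, invoking the classical fact that $(\mathbb{Z}/p^2\mathbb{Z})^*$ is cyclic for every odd prime $p$ (equivalently, a primitive root mod $p^2$ exists), condition $\mathrm{(a)}$ of Lemma \ref{lem:bakkerbe2} is satisfied; by the equivalence, condition $\mathrm{(b)}$ holds, so for each divisor $d$ of $\varphi(p^2)$ the congruence $x^d\equiv 1\ (\mod p^2)$ has exactly $d$ solutions. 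Specializing to our $d$ finishes the proof. Equivalently, one may argue from Lemma \ref{lem:bakkerbe}(b): since $x=1$ solves the congruence and $1$ is coprime to $p^2$, the number of solutions is $\gcd(d,\varphi(p^2))=\gcd\bigl(d,(p-1)p\bigr)=d$, the last step again because $d\mid(p-1)p$.

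I do not expect a genuine obstacle here, since the statement is designed to be a consequence of the preceding lemmas. The one point demanding care is that the exact count ``$d$ solutions for each $d\mid\varphi(n)$'' is not valid for arbitrary moduli $n$ — it rests on the existence of a primitive root, which fails, for instance, modulo $8$. The essential content of the argument is therefore the cyclicity of $(\mathbb{Z}/p^2\mathbb{Z})^*$ for odd $p$; the divisibility $d\mid\varphi(p^2)$ that triggers the lemma is automatic from the definition of $d$ as a gcd.
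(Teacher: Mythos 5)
Your proof is correct and is essentially the derivation the paper intends: the corollary is stated immediately after Lemma \ref{lem:bakkerbe2} with no separate proof, the point being exactly that $d\mid(p-1)p=\varphi(p^2)$ by definition of the gcd and that $(\mathbb{Z}/p^2\mathbb{Z})^*$ is cyclic for odd $p$, so part (b) of that lemma applies. Your remark that the specific value $2^{p-1}-1$ plays no role in the count (it only matters later, in Proposition \ref{prop:3}) is a fair and accurate observation.
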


\section{Main results}

\begin{prop}\label{prop:1}
For the Fermat primes, i.e., the prime numbers of the form $p=2^{2^k}+1$, we have $L(p)=1$.
\end{prop}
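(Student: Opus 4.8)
The plan is to translate the cyclic condition into a statement about multiplicative orders and then exploit the special arithmetic of Fermat primes, namely that $p-1=2^{2^k}$ is a pure power of $2$. A cycle of length $\ell$ is a set of residues satisfying $x^{2^\ell}\equiv x\ (\mod p)$, and the claim $L(p)=1$ is exactly the assertion that no cycle of length $\ell\geq 2$ exists, i.e.\ that every residue lying on a cycle is a fixed point of the squaring map. So I would first record that the fixed points, solutions of $x^2\equiv x\ (\mod p)$, are precisely $x\equiv 0$ and $x\equiv 1$, and then argue that no other residue can be periodic.

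First I would dispose of $x\equiv 0$, which is a fixed point contributing a cycle of length $1$. For the remaining residues, which are units, I would use the hypothesis on $p$. If a unit $x$ lies on a cycle of length $\ell$, then $x^{2^\ell}\equiv x$, hence $x^{2^\ell-1}\equiv 1\ (\mod p)$. The multiplicative order of $x$ therefore divides both $2^\ell-1$, which is odd, and $\varphi(p)=p-1=2^{2^k}$, which is a power of $2$. A power of $2$ that divides an odd number must equal $1$, so $\mathrm{ord}(x)=1$ and $x\equiv 1$. Thus the only unit that can be periodic is $x\equiv 1$, which is again a fixed point.

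Combining the two cases, every residue lying on a cycle is either $0$ or $1$, each a length-$1$ fixed point, so the largest cycle has length $1$ and $L(p)=1$. I expect the argument to be short, the only genuine step being the observation that $\gcd(2^\ell-1,\,2^{2^k})=1$, so there is essentially no serious obstacle. As an alternative I could phrase the same fact through Lemma \ref{lem:bakkerbe}(b): the number of solutions of $x^{2^\ell-1}\equiv 1\ (\mod p)$ equals $\gcd(2^\ell-1,\varphi(p))=1$, immediately forcing $x\equiv 1$; or, most transparently, I could pass to a primitive root $g$, under which squaring becomes the doubling map $a\mapsto 2a$ on exponents modulo $2^{2^k}$, so that $(2^\ell-1)a\equiv 0$ has only the solution $a\equiv 0$ because $2^\ell-1$ is invertible modulo a power of $2$. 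The main thing to be careful about is simply to treat $x\equiv 0$ separately and to confirm that ``no cycle of length $\geq 2$'' is precisely what $L(p)=1$ means.
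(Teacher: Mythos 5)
Your proposal is correct and follows essentially the same route as the paper: both reduce a cycle of length $\ell$ to the congruence $x^{2^\ell-1}\equiv 1\ (\mod p)$ and conclude from $\gcd(2^\ell-1,\,2^{2^k})=1$ that $x\equiv 1$ is the only periodic unit. Your phrasing via the multiplicative order of $x$ (rather than the solution-counting Lemma \ref{lem:bakkerbe}(b), which you mention as an alternative and which is exactly what the paper invokes) is an inessential variation, and your explicit handling of $x\equiv 0$ is a minor point the paper sidesteps by restricting to $x$ coprime to $F_k$.
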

\begin{proof}
Suppose $F_k=2^{2^k}+1$ is a prime, $k\geqslant 1$. Then $\varphi(F_k)=2^{2^k}$. To find the largest cycle of the quadratic residue equation of modulo $F_k$. We consider the equation
\[
x^{2^m}\equiv x\ (\mod F_k),
\]
where $x$ is coprime with $F_k$. Then it is equivalent to the equation
\begin{equation}\label{eqn:1}
x^{2^m-1}\equiv 1\ (\mod F_k).
\end{equation}
By Lemma \ref{lem:bakkerbe}, \eqref{eqn:1} have a solution. And the number of the solutions is equal to
\[
\gcd(2^m-1, 2^{2^k})=1.
\]
Since $x=1$ is the trivial solution of \eqref{eqn:1}, hence there is no other solutions. Hence we have $L(F_k)=1$.
\end{proof}

\begin{remark}
Since there is a primitive root $\mod F_k$, by Lemma \ref{lem:bakkerbe2}, for any $1\leqslant m\leqslant 2^k$, $x^{2^m}\equiv 1\ (\mod F_k)$ have exact $2^m$ solutions.
Suppose $k\geqslant 1$. Let's consider the equation
\[
x^2\equiv -1\ (\mod F_k).
\]
By Lemma \ref{lem:bakkerbe}, it has a solution since $(-1)^{2^{2^k-1}}\equiv 1\ (\mod F_k)$. And there are exactly two solutions of this equation. By Lemma \ref{lem:primitive-root}, these two solutions $\{x_1,x_2\}$ are the primitive roots modulo $F_k$. By definition of primitive root, they have the maximal order. That is, $\mathrm{ord}(x_1)=\mathrm{ord}(x_2)=\varphi(F_k)=2^{2^k}$. Hence, any other number $a$ coprime to $F_k$ (not the primitive root) with order $\mathrm{ord}(a)<2^{2^k}$. And $\mathrm{ord}(a)| 2^{2^k-1}$. Hence, we have
\begin{equation}\label{eqn:0}
a^{2^{2^k-1}}\equiv 1\ (\mod F_k).
\end{equation}
It infers that the equation $x^2\equiv a\ (\mod F_k)$ has a solution for any $a$ coprime to $F_k$. (Use Lemma \ref{lem:bakkerbe} again.)
\end{remark}

\begin{prop}\label{prop:2}
For the Fermat primes, i.e., the prime numbers of the form $p=2^{2^k}+1$, we have
\[
L(p^2)=L((2^{2^k}+1)^2)=\dfrac{1}{2^{2^k-k-1}}(p-1)=2^{k+1}.
\]
\end{prop}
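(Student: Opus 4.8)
The plan is to reduce the computation of $L(p^2)$ to counting the periodic points of the squaring map on $(\mathbb{Z}/p^2\mathbb{Z})^*$ and then pinning down the \emph{exact} length of the nontrivial cycles. First I would observe that a unit $x$ coprime to $p^2$ lies on a cycle of length dividing $\ell$ precisely when $x^{2^\ell}\equiv x\ (\mod p^2)$, i.e.\ $x^{2^\ell-1}\equiv 1\ (\mod p^2)$. Since $x=1$ always solves this, Lemma \ref{lem:bakkerbe}(a) gives solvability and Lemma \ref{lem:bakkerbe}(b) gives the exact count
\[
N(\ell):=\#\{x\in(\mathbb{Z}/p^2\mathbb{Z})^* : x^{2^\ell}\equiv x\}=\gcd\bigl(2^\ell-1,\varphi(p^2)\bigr).
\]

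Next I would evaluate this gcd for a Fermat prime $p=2^{2^k}+1$. By Lemma \ref{lem:Euler-totient-function}, $\varphi(p^2)=p(p-1)=p\cdot 2^{2^k}$. Because $2^\ell-1$ is odd, its gcd with the $2$-part $2^{2^k}$ is $1$, so $N(\ell)=\gcd(2^\ell-1,p)$, which equals $p$ when $2^\ell\equiv 1\ (\mod p)$ and equals $1$ otherwise. Writing $r=\mathrm{ord}_p(2)$, this says $N(\ell)=p$ exactly when $r\mid\ell$, and $N(\ell)=1$ for every other $\ell$.

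The delicate step is to convert this count into the maximal cycle length, i.e.\ to separate ``length divides $\ell$'' from ``length equals $\ell$''. Here I would argue that for every $\ell<r$ we have $N(\ell)=1$, so the only periodic unit whose length divides such an $\ell$ is $x=1$; hence any nontrivial cycle must have length a multiple of $r$. At $\ell=r$ we get $N(r)=p$, so there are $p-1$ nontrivial periodic units whose length divides $r$, and combined with the previous sentence these lengths are forced to be exactly $r$. Finally, since $N(\ell)$ never exceeds $p$ and already attains $p$ at $\ell=r$, the solution set stabilises there and no unit can have length exceeding $r$; so the largest cycle among units has length exactly $r=\mathrm{ord}_p(2)$.

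It then remains to rule out longer cycles from non-units and to compute $r$. Every non-unit mod $p^2$ is a multiple of $p$, and $(jp)^2\equiv 0\ (\mod p^2)$, so all non-units collapse to the fixed point $0$ after one step and contribute only cycles of length $1$. For the order I would use $2^{2^k}=p-1\equiv -1\ (\mod p)$, whence $2^{2^{k+1}}\equiv 1$ while $2^{2^k}\equiv -1\neq 1$, giving $r=2^{k+1}$. This yields $L(p^2)=2^{k+1}=\tfrac{1}{2^{2^k-k-1}}(p-1)$, as claimed. I expect the main obstacle to be the middle step: a naive application of Lemma \ref{lem:bakkerbe} only controls how many points satisfy $x^{2^\ell}\equiv x$ (cycle length dividing $\ell$), so one must exploit the jump of $N(\ell)$ from $1$ to $p$ at $\ell=r$ to certify that genuine $r$-cycles exist rather than merely shorter ones.
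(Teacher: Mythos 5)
Your proposal follows essentially the same route as the paper: reduce $x^{2^m}\equiv x\ (\mod p^2)$ to $x^{2^m-1}\equiv 1\ (\mod p^2)$, count solutions via $\gcd(2^m-1,\varphi(p^2))=\gcd(2^m-1,p)$, and observe that nontrivial solutions appear exactly when $\mathrm{ord}_p(2)=2^{k+1}$ divides $m$. Your treatment is in fact tighter than the paper's at the one delicate point --- converting ``period divides $\ell$'' into ``period equals $r$'' via the stabilisation of the solution sets, and disposing of the non-units --- steps the paper asserts without argument.
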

\begin{proof}
We have verified it for $k=0,1,2,3,4$, the known Fermat primes. Suppose $p=F_k=2^{2^k}+1$ is a prime, $k\geqslant 1$. Similar to the Proposition \ref{prop:1}, we consider the equation
\[
x^{2^m}\equiv x\ (\mod F_k^2),\quad x\neq F_k,
\]
and find the nontrivial solution for the maximal $m$. Since $F_k$ is a prime, for $x\neq F_k$, $x$ is coprime with $F_k^2$. Then it is equivalent to
\begin{equation}\label{eqn:2}
x^{2^m-1}\equiv 1\ (\mod F_k^2).
\end{equation}
By Lemma \ref{lem:bakkerbe}, there are $\gcd(2^m-1, \varphi(F_k^2))$ solutions. By Lemma \ref{lem:Euler-totient-function}, $\varphi(F_k^2)=(F_k-1)F_k=2^{2^k}(2^{2^k}+1)$. Thus, if $\gcd(2^m-1, 2^{2^k}+1)>1$, the equation \eqref{eqn:2} has nontrivial solutions. The number of the solutions is equal to $2^{2^k}+1$.

Because $F_k=2^{2^k}+1$ is a prime number,
\[
\gcd\bigl(2^m-1, 2^{2^k}+1\bigr)>1\ \Leftrightarrow\ (2^{2^k}+1)\,\bigr|\,(2^m-1).
\]
It infers that $m=h\cdot 2^{k+1}$, here $h\geqslant 1$. And there are $2^{2^k}+1$ solutions include the trivial $x=1$. So, for example, if $h=2$, then
\[
x^{2^m}=x^{2^{2^{k+1}}\cdot 2^{2^{k+1}}}=\Bigl(x^{2^{2^{k+1}}}\Bigr)^{2^{2^{k+1}}}\equiv x^{2^{2^{k+1}}}\equiv x\ (\mod F_k^2).
\]
Hence, $L(F_k^2)=2^{k+1}$ for Fermat prime $F_k$.
\end{proof}


Since there are $(p-1)/2$ quadratic residues mod $p$, we have $L(p)\leq (p-1)/2$. However, $1$ maps to itself under squaring mod $p$, so we expect $L(p)\leq (p-3)/2$. In the theorem below, we state a condition for this to happen.

\begin{thm}\label{thm:by-Dr.Timothy}
Let $p$ be a non-Fermat prime. $L(p)=(p-3)/2$ if and only if $q=(p-1)/2$ is prime and $2$ is a generator of $(\mathbb{Z}/q\mathbb{Z})^{*}$.
\end{thm}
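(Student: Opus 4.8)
The plan is to convert the squaring map on $(\mathbb{Z}/p\mathbb{Z})^{*}$ into the doubling map on exponents and thereby express $L(p)$ as the order of $2$ modulo the odd part of $p-1$. Fix a primitive root $g$ modulo $p$ and write each unit as $g^{a}$ with $a\in\mathbb{Z}/(p-1)\mathbb{Z}$; then squaring is $g^{a}\mapsto g^{2a}$, i.e.\ doubling of exponents modulo $N:=p-1$. The only non-unit residue is $0$, which is a fixed point and contributes a cycle of length $1$, so it never affects the maximum. A unit $g^{a}$ lies on a cycle precisely when $a$ is \emph{periodic} (not merely pre-periodic) under $a\mapsto 2a\ (\mod N)$, and the length of its cycle is the least $k\geq 1$ with $2^{k}a\equiv a\ (\mod N)$.

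First I would compute these periods. Setting $d=\gcd(a,N)$, the congruence $2^{k}a\equiv a\ (\mod N)$ is equivalent to $2^{k}\equiv 1\ (\mod N/d)$; hence $a$ is periodic iff $N/d$ is odd, in which case its cycle length equals $\mathrm{ord}_{N/d}(2)$. As $a$ varies, $N/d$ runs over every odd divisor $m$ of $N$ (for a given odd $m\mid N$, take $a=N/m$). Since $m_{1}\mid m_{2}$ implies $\mathrm{ord}_{m_{1}}(2)\mid \mathrm{ord}_{m_{2}}(2)$, the largest period occurs at the largest odd divisor. Writing $t$ for the odd part of $p-1$, this gives the key identity
\[
L(p)=\mathrm{ord}_{t}(2).
\]

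With this identity in hand, the theorem reduces to elementary divisor estimates. Put $q=(p-1)/2$, so that $(p-3)/2=q-1$, and note $t\mid q$ since $t$ is an odd divisor of $p-1=2q$; in particular $t\leq q$. For $t\geq 2$ one has the chain
\[
\mathrm{ord}_{t}(2)\ \leq\ \varphi(t)\ \leq\ t-1\ \leq\ q-1 .
\]
Thus $L(p)=q-1$ forces equality at every step: $t=q$ (so $q$ is odd and is the odd part of $p-1$), $\varphi(t)=t-1$ (so $q$ is prime), and $\mathrm{ord}_{q}(2)=\varphi(q)$ (so $2$ generates $(\mathbb{Z}/q\mathbb{Z})^{*}$). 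Conversely, if $q$ is an odd prime for which $2$ is a generator, then $p-1=2q$ has odd part $t=q$ and $L(p)=\mathrm{ord}_{q}(2)=\varphi(q)=q-1$, as required. The remaining degenerate possibilities, $t=1$ (whence $L(p)=1$) and $q=2$, both force $p$ to be a Fermat prime (for example $q=2$ gives $p=5$), which is exactly what the non-Fermat hypothesis removes; this is why that hypothesis is needed to keep the biconditional clean.

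The step I expect to be the main obstacle is the key identity $L(p)=\mathrm{ord}_{t}(2)$ itself, since the rest is routine once it is available. The delicate points there are to distinguish genuinely periodic exponents from merely pre-periodic ones (this is where the oddness of $N/\gcd(a,N)$ enters), and to check that \emph{every} odd divisor of $p-1$ is realized as some $N/\gcd(a,N)$ so that the maximum is truly taken over all odd divisors and is attained at the odd part $t$. Once the monotonicity $m_{1}\mid m_{2}\Rightarrow\mathrm{ord}_{m_{1}}(2)\mid\mathrm{ord}_{m_{2}}(2)$ pins the maximum to $t$, the inequality chain closes the argument in both directions.
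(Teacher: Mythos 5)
Your proof is correct and takes essentially the same route as the paper's: both reduce the statement to the identity $L(p)=\mathrm{ord}_{t}(2)$ for $t$ the largest odd divisor of $p-1$ (equivalently, the maximum of $\mathrm{ord}_{d}(2)$ over odd divisors $d$ of $p-1$) and then conclude by the divisor estimates $\mathrm{ord}_{t}(2)\leq\varphi(t)\leq t-1\leq q-1$ with equality forced throughout. Your write-up is somewhat more explicit than the paper's, notably in checking which residues are genuinely periodic under squaring via the doubling map on exponents and in isolating the degenerate cases ($t=1$, $q=2$) that the non-Fermat hypothesis excludes.
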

\begin{proof}
Let $d$ be an odd divisor of $p-1$. Then there is a solution to the equations
\begin{equation}\label{1}
x^{d}\equiv 1\ (\mod p),\quad x^i\not\equiv 1\ (\mod p),
\end{equation}
$1\leq i<d$. Let $x$ satisfy $\eqref{1}$.
Then such $x$ is a solution to the equation
$$
x^{2^m}\equiv x\ (\mod p)
$$
for all $m$ such that $\mbox{ord}_d(2)|m$ where $\mbox{ord}_d(2)$ is the multiplicative order of $2$ mod $d$. Furthermore,
$$
L(p)=\max_{\substack{d|p-1\\d \equiv 1 \bmod 2}}\mbox{ord}_d(2)\leq\max_{\substack{d|p-1\\d \equiv 1 \bmod 2}}d-1.
$$
We need only consider the case where $(p-1)/2$ is odd, since if $d<(p-1)/2$, then $\mbox{ord}_d(2)<(p-3)/2$. Now any odd $d$ dividing $(p-1)$ satisfies $\mbox{ord}_d(2)\leq \mbox{ord}_{(p-1)/2}(2)$, hence $L(p)=\mbox{ord}_{(p-1)/2}(2)$. This is $(p-3)/2$ if and only if $q=(p-1)/2$ is prime and $2$ is a generator of $(\mathbb{Z}/q\mathbb{Z})^{*}$.
\end{proof}
\begin{remark}\label{rem:by-Dr.Timothy}
Similar to the case for $L(p)$, we have
\[
L(p^2)=\max_{\substack{d|p(p-1)\\d \equiv 1 \bmod 2}}\mbox{ord}_d(2).
\]
Let $n$ be the largest odd divisor of $p-1$. Then $L(p)=\mbox{ord}_n(2)$ and $L(p^2)=\mbox{ord}_{np}(2)=\lcm(L(p),\mbox{ord}_p(2))$, where $\mathrm{lcm}(a_1,\ldots,a_n)$ stands for the lowest common multiple of $a_1,\ldots,a_n$.
\end{remark}

From the above tables, we guess there are formulas for $L(p^n)$ for $n\geqslant 2$.

\begin{prop}\label{prop:3}
Let $p$ be a prime. Then we have
\[
L(p^2)\leqslant(p-1)L(p).
\]
\end{prop}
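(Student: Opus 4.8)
The plan is to reduce the inequality to the multiplicative-order formula already recorded in Remark \ref{rem:by-Dr.Timothy}. First I would dispose of the trivial case $p=2$: the tables give $L(4)=L(2)=1$, so the asserted inequality reads $1\le (2-1)\cdot 1=1$. Hence I may assume $p$ is odd, in which case $2$ is a unit modulo every power of $p$ and $\mbox{ord}_p(2)$ is well defined.

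Next I would invoke the order formula. Let $n$ be the largest odd divisor of $p-1$; by Remark \ref{rem:by-Dr.Timothy} we have $L(p)=\mbox{ord}_n(2)$ and
\[
L(p^2)=\mbox{ord}_{np}(2)=\lcm\bigl(L(p),\mbox{ord}_p(2)\bigr).
\]
If one wants this independently of the remark, it follows from the cyclic structure of $(\mathbb{Z}/p^2\mathbb{Z})^*$: the elements lying on squaring-cycles are exactly those of odd order, which form the subgroup of order $u$ equal to the largest odd divisor of $\varphi(p^2)=(p-1)p$, namely $u=np$ (here $p$ is odd); on that subgroup squaring is a bijection and an element of order $d$ has cycle length $\mbox{ord}_d(2)$, maximized over $d\mid u$ at $d=u$. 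Since $n\mid p-1<p$ forces $\gcd(n,p)=1$, the Chinese Remainder Theorem splits $\mbox{ord}_{np}(2)$ into the stated least common multiple.

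The conclusion is then immediate from two elementary bounds. For any positive integers $a,b$ one has $\lcm(a,b)\cdot\gcd(a,b)=ab$ and hence $\lcm(a,b)\le ab$, so
\[
L(p^2)=\lcm\bigl(L(p),\mbox{ord}_p(2)\bigr)\le L(p)\cdot\mbox{ord}_p(2).
\]
Finally, by Fermat's little theorem $2^{p-1}\equiv 1\ (\mod p)$, so $\mbox{ord}_p(2)\mid p-1$ and in particular $\mbox{ord}_p(2)\le p-1$. Substituting yields $L(p^2)\le (p-1)L(p)$.

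There is no serious obstacle in this argument: once Remark \ref{rem:by-Dr.Timothy} is granted, the proof is a two-line computation combining $\lcm(a,b)\le ab$ with $\mbox{ord}_p(2)\mid p-1$. The only point demanding care is that the whole reduction rests on the lcm decomposition of $L(p^2)$; if that must be established from scratch, the nontrivial ingredients are the identification of the cycle-bearing residues with the odd-order subgroup and the coprimality $\gcd(n,p)=1$, which is precisely what makes the Chinese Remainder splitting valid.
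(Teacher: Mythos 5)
Your argument is correct, but it takes a genuinely different route from the paper. The paper's own proof is a direct lifting/projection argument: it reduces $x^{2^m}\equiv x\ (\mod p^2)$ to $x^{2^m-1}\equiv 1\ (\mod p^2)$, observes that the admissible exponents satisfy $m=h(p-1)$, and then uses the picture in Figure \ref{Fig:mod-p2} — every element of a cycle mod $p^2$ sits over an element of a cycle mod $p$ of length at most $L(p)$, and there are boundedly many lifts per column — to conclude $L(p^2)\leqslant (p-1)L(p)$. You instead route everything through the identity $L(p^2)=\lcm\bigl(L(p),\mathrm{ord}_p(2)\bigr)$ of Remark \ref{rem:by-Dr.Timothy} and finish with $\lcm(a,b)\leqslant ab$ together with $\mathrm{ord}_p(2)\mid p-1$. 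What your approach buys is precision: it gives the exact value of $L(p^2)$, not just an upper bound, and it explains at once the empirical observation later in the paper that $L(p^2)=\frac{1}{k}(p-1)L(p)$ for various $k$ (namely $k=(p-1)L(p)/\lcm(L(p),\mathrm{ord}_p(2))$). What it costs is that Remark \ref{rem:by-Dr.Timothy} is stated in the paper without proof; you are right to flag this as the load-bearing step, and your sketch of its justification — cycles consist exactly of the odd-order elements, these form the subgroup of order $np$ in the cyclic group $(\mathbb{Z}/p^2\mathbb{Z})^*$, an element of order $d$ lies on a cycle of length $\mathrm{ord}_d(2)$ which is maximized at $d=np$, and $\gcd(n,p)=1$ splits the order by CRT — is sound and arguably more rigorous than the paper's counting, which asserts $p-1$ lifts per column where the naive count of lifts of a residue class mod $p$ inside $\mathbb{Z}/p^2\mathbb{Z}$ is $p$. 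Your separate treatment of $p=2$ is also a point of care the paper omits.
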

\begin{proof}
Suppose $p$ is a prime. Similar to the Proposition \ref{prop:1}, we consider the equation
\[
x^{2^m}\equiv x\ (\mod p^2),
\]
and find the nontrivial solution for the maximal $m$. Suppose $x$ is coprime with $p$. Then it is equivalent to
\begin{equation}\label{eqn:3}
x^{2^m-1}\equiv 1\ (\mod p^2).
\end{equation}
By Lemma \ref{lem:bakkerbe}, there are $\gcd(2^m-1, \varphi(p^2))$ solutions. By Lemma \ref{lem:Euler-totient-function}, $\varphi(p^2)=(p-1)p$. By Euler theorem, for every odd prime $p$, we have
\[
2^{p-1}\equiv 1\ (\mod p).
\]
Thus, $p|(2^{p-1}-1)$ for $p>2$. Hence
\[
\begin{split}
& \gcd(2^m-1, (p-1)p)>1\\
\Leftrightarrow\ &\gcd(2^{p-1}-1,2^{m}-1)>1\quad\text{or}\quad\gcd(p-1,2^m-1)>1.\\
\end{split}
\]
Hence for find the largest $m$, we only need to consider $\gcd(2^{p-1}-1,2^{m}-1)>1$ with $m\geqslant p-1$.  Which infers that $m=h(p-1)$, $h\geqslant 1$.

On the other hand, we can proved that $h\leqslant L(p)$. In fact Figure \ref{Fig:mod-p2} describes the relationship between modulo $p$ and modulo $p^2$. Suppose
\[
x_1^2\equiv x_2\ (\mod p),\quad x_1^2\equiv x'_2\ (\mod p^2).
\]
Then there exists some integers $s$ and $t$ such that
\[
x_1^2=x_2+sp,\quad x_1^2=x'_2+tp^2.
\]
Thus, we have
\[
x_2-x'_2=p(ps-t).
\]
It shows that $x'_2$ and $x_2$ lie in the same vertical line in Figure \ref{Fig:mod-p2}. And ${x'}^2_2\equiv x_2^2\equiv x_3\ (\mod p)$. Let ${x'}^2_2\equiv x'_3\ (\mod p^2)$. Then it infers that $ x'_3\equiv x_3\ (\mod p)$. Hence, each $x'_i$ lies in the same line of $x_i$. Therefore, the elements in the biggest cycle all lie  in the area bounded by dashed lines below. Because there are no more than $p-1$ vertical lines, the total numbers of the elements in the biggest cycle is less or equal to $(p-1)L(p)$. It means that $L(p^2)\leqslant (p-1)L(p)$.


\begin{figure}[htbp]
  \centering
\begin{tikzpicture}[scale=0.8]
\draw[gray] (0,0) -- (11,0);

\fill [blue] ($(0,0)$) circle (1pt);
\fill [black] ($(1,0)$) circle (1pt);
\fill [black] ($(2,0)$) circle (1pt);
\fill [black] ($(3,0)$) circle (1pt);
\fill [red] ($(4,0)$) circle (1.5pt);
\fill [black] ($(5,0)$) circle (1pt);
\fill [red] ($(6,0)$) circle (1.5pt);
\fill [black] ($(7,0)$) circle (1pt);
\fill [red] ($(8,0)$) circle (1.5pt);
\fill [red] ($(9,0)$) circle (1.5pt);
\fill [black] ($(10,0)$) circle (1pt);
\fill [blue] ($(11,0)$) circle (1.5pt);

\node  at (0,0)[anchor=north] {\footnotesize $(p-1)p$};
\node  at (1,0)[anchor=south] {\footnotesize $+1$};
\node  at (2,0)[anchor=south] {\footnotesize $+2$};
\node  at (10,0)[anchor=south] {\footnotesize $+p-1$};
\node  at (11,0)[anchor=south] {\footnotesize $p^2$};

\draw[gray] (0,1) -- (11,1);

\fill [blue] ($(0,1)$) circle (1pt);
\fill [black] ($(1,1)$) circle (1pt);
\fill [black] ($(2,1)$) circle (1pt);
\fill [black] ($(3,1)$) circle (1pt);
\fill [red] ($(4,1)$) circle (1.5pt);
\fill [black] ($(5,1)$) circle (1pt);
\fill [red] ($(6,1)$) circle (1.5pt);
\fill [black] ($(7,1)$) circle (1pt);
\fill [red] ($(8,1)$) circle (1.5pt);
\fill [red] ($(9,1)$) circle (1.5pt);
\fill [black] ($(10,1)$) circle (1pt);
\fill [blue] ($(11,1)$) circle (1.5pt);

\node  at (0,1)[anchor=north] {\footnotesize $(p-2)p$};
\node  at (1,1)[anchor=south] {\footnotesize $+1$};
\node  at (2,1)[anchor=south] {\footnotesize $+2$};
\node  at (10,1)[anchor=south] {\footnotesize $+p-1$};
\node  at (11.4,1)[anchor=south] {\footnotesize $(p-1)p$};

\node  at (6,1)[anchor=south] {\footnotesize $x'_{L(p)}$};

\draw[gray] (0,3) -- (11,3);

\fill [blue] ($(0,3)$) circle (1pt);
\fill [black] ($(1,3)$) circle (1pt);
\fill [black] ($(2,3)$) circle (1pt);
\fill [black] ($(3,3)$) circle (1pt);
\fill [red] ($(4,3)$) circle (1.5pt);
\fill [black] ($(5,3)$) circle (1pt);
\fill [red] ($(6,3)$) circle (1.5pt);
\fill [black] ($(7,3)$) circle (1pt);
\fill [red] ($(8,3)$) circle (1.5pt);
\fill [red] ($(9,3)$) circle (1.5pt);
\fill [black] ($(10,3)$) circle (1pt);
\fill [blue] ($(11,3)$) circle (1.5pt);

\node  at (0,3)[anchor=north] {\footnotesize $(k-1)p$};
\node  at (1,3)[anchor=south] {\footnotesize $+1$};
\node  at (2,3)[anchor=south] {\footnotesize $+2$};
\node  at (10,3)[anchor=south] {\footnotesize $+p-1$};
\node  at (11,3)[anchor=south] {\footnotesize $kp$};

\node  at (4,3)[anchor=south] {\footnotesize $x'_1$};
\node  at (9,3)[anchor=south] {\footnotesize $x'_i$};

\draw[gray] (0,5) -- (11,5);

\fill [blue] ($(0,5)$) circle (1pt);
\fill [black] ($(1,5)$) circle (1pt);
\fill [black] ($(2,5)$) circle (1pt);
\fill [black] ($(3,5)$) circle (1pt);
\fill [red] ($(4,5)$) circle (1.5pt);
\fill [black] ($(5,5)$) circle (1pt);
\fill [red] ($(6,5)$) circle (1.5pt);
\fill [black] ($(7,5)$) circle (1pt);
\fill [red] ($(8,5)$) circle (1.5pt);
\fill [red] ($(9,5)$) circle (1.5pt);
\fill [black] ($(10,5)$) circle (1pt);
\fill [blue] ($(11,5)$) circle (1.5pt);

\node  at (0,5)[anchor=north] {\footnotesize $p$};
\node  at (1,5)[anchor=south] {\footnotesize $+1$};
\node  at (2,5)[anchor=south] {\footnotesize $+2$};
\node  at (10,5)[anchor=south] {\footnotesize $+p-1$};
\node  at (11,5)[anchor=south] {\footnotesize $2p$};

\node  at (8,5)[anchor=south] {\footnotesize $x'_2$};

\draw[gray] (0,6) -- (11,6);

\fill [blue] ($(0,6)$) circle (1pt);
\fill [black] ($(1,6)$) circle (1pt);
\fill [black] ($(2,6)$) circle (1pt);
\fill [black] ($(3,6)$) circle (1pt);
\fill [red] ($(4,6)$) circle (1.5pt);
\fill [black] ($(5,6)$) circle (1pt);
\fill [red] ($(6,6)$) circle (1.5pt);
\fill [black] ($(7,6)$) circle (1pt);
\fill [red] ($(8,6)$) circle (1.5pt);
\fill [red] ($(9,6)$) circle (1.5pt);
\fill [black] ($(10,6)$) circle (1pt);
\fill [blue] ($(11,6)$) circle (1.5pt);

\node  at (0,6)[anchor=north] {\footnotesize $0$};
\node  at (1,6)[anchor=south] {\footnotesize $+1$};
\node  at (2,6)[anchor=south] {\footnotesize $+2$};
\node  at (10,6)[anchor=south] {\footnotesize $+p-1$};
\node  at (11,6)[anchor=south] {\footnotesize $p$};

\node  at (4,6)[anchor=south] {\footnotesize $x_1$};
\node  at (6,6)[anchor=south] {\footnotesize $x_{L(p)}$};
\node  at (8,6)[anchor=south] {\footnotesize $x_2$};
\node  at (9,6)[anchor=south] {\footnotesize $x_i$};

\draw[gray,dashed] (3.8,-0.5) -- (3.8,6.5);
\draw[gray,dashed] (9.2,-0.5) -- (9.2,6.5);
\draw[gray,dashed] (3.8,6.5) -- (9.2,6.5);
\draw[gray,dashed] (3.8,-0.5) -- (9.2,-0.5);
\node  at (6.5,4) {\footnotesize $\leqslant(p-1)L(p)$ elements};

\end{tikzpicture}
  \caption{$\mod p$ and $\mod p^2$}\label{Fig:mod-p2}
\end{figure}
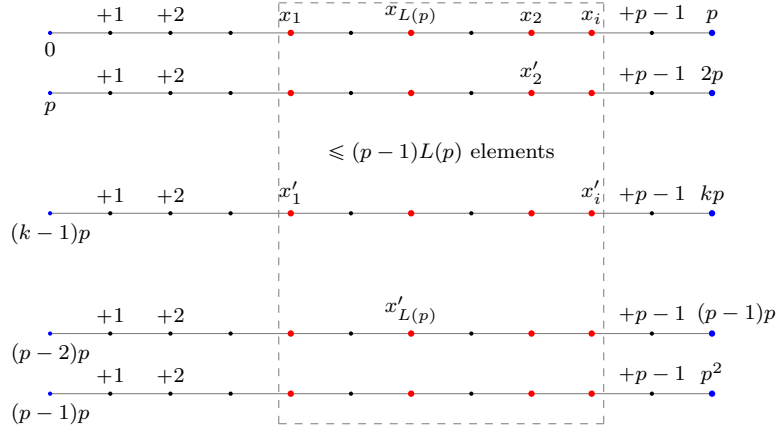

\end{proof}

\begin{prop}
For $n\geqslant 3$, we have
\[
L(p^n)=p^{n-2}\cdot L(p^2).
\]
\end{prop}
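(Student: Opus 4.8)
The plan is to reduce everything to the multiplicative order of $2$, extending the formula recorded in Remark \ref{rem:by-Dr.Timothy} from $p^2$ to an arbitrary prime power $p^n$. First I would argue, exactly as for $p$ and $p^2$, that a residue $x$ coprime to $p$ lies on a cycle modulo $p^n$ precisely when $\mathrm{ord}_{p^n}(x)$ is odd, in which case the length of its cycle equals $\mathrm{ord}_d(2)$ with $d=\mathrm{ord}_{p^n}(x)$; the only non-unit on a cycle is $0$, a fixed point of length $1$ (any other multiple of $p$ has strictly increasing $p$-valuation under squaring and eventually lands on $0$). Since $(\mathbb{Z}/p^n\mathbb{Z})^*$ is cyclic of order $\varphi(p^n)=(p-1)p^{n-1}$, every odd $d\mid\varphi(p^n)$ is realized as an order, so
\[
L(p^n)=\max_{\substack{d\mid (p-1)p^{n-1}\\ d\equiv 1\bmod 2}}\mathrm{ord}_d(2).
\]

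Next I would collapse this maximum to a single term. Writing $p-1=2^a u$ with $u$ odd, the odd divisors of $(p-1)p^{n-1}$ are exactly the divisors of $u\,p^{n-1}$. Because $d_1\mid d_2$ forces $\mathrm{ord}_{d_1}(2)\mid\mathrm{ord}_{d_2}(2)$, the order function is monotone along divisibility, so the maximum is attained at the largest admissible modulus, giving $L(p^n)=\mathrm{ord}_{u\,p^{n-1}}(2)$; in particular $L(p)=\mathrm{ord}_u(2)$ and $L(p^2)=\mathrm{ord}_{up}(2)$, recovering the Remark. Since $\gcd(u,p)=1$, the Chinese Remainder Theorem yields
\[
L(p^n)=\mathrm{ord}_{u\,p^{n-1}}(2)=\mathrm{lcm}\bigl(\mathrm{ord}_u(2),\,\mathrm{ord}_{p^{n-1}}(2)\bigr)=\mathrm{lcm}\bigl(L(p),\,\mathrm{ord}_{p^{n-1}}(2)\bigr).
\]

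The arithmetic core is the growth of $\mathrm{ord}_{p^{j}}(2)$ in $j$. I would invoke the standard lifting-the-exponent fact that, for $e=\mathrm{ord}_p(2)$ and $t=v_p(2^e-1)\ge 1$, one has $\mathrm{ord}_{p^{j}}(2)=e\,p^{\max(0,\,j-t)}$; in the generic case $t=1$ this reads $\mathrm{ord}_{p^{j}}(2)=e\,p^{\,j-1}$ for all $j\ge 1$, so $\mathrm{ord}_{p^{n-1}}(2)=e\,p^{\,n-2}$ and $\mathrm{ord}_p(2)=e$. Finally I would use that $p\nmid L(p)$: indeed $L(p)=\mathrm{ord}_u(2)$ divides $\varphi(u)<u\le p-1<p$, and likewise $p\nmid e$ because $e\mid p-1$. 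Since $p^{\,n-2}$ then carries the entire $p$-part of the right-hand factor,
\[
L(p^n)=\mathrm{lcm}\bigl(L(p),\,e\,p^{\,n-2}\bigr)=p^{\,n-2}\,\mathrm{lcm}\bigl(L(p),e\bigr)=p^{\,n-2}\,L(p^2),
\]
which is the claim; equivalently one obtains the one-step recursion $L(p^{n})=p\,L(p^{n-1})$ for $n\ge 3$ and iterates down to $L(p^2)$.

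The main obstacle is the exceptional behavior hidden in $t$. The identity as stated depends on $t=v_p(2^{p-1}-1)=1$, i.e.\ on $2^{p-1}\not\equiv 1\ (\mod p^2)$; if $p$ is a Wieferich prime (the smallest being $1093$, well beyond the tabulated range) then $t\ge 2$, the multiplication-by-$p$ regime only begins at $j=t+1$, and the clean relation to $L(p^2)$ shifts. I would therefore either add the non-Wieferich hypothesis explicitly or keep the statement honest by recording the general formula $L(p^n)=\mathrm{lcm}\bigl(L(p),\,e\,p^{\max(0,n-1-t)}\bigr)$ and noting that it reduces to $p^{\,n-2}L(p^2)$ once $n-1\ge t$. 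Establishing the lifting-the-exponent estimate for $\mathrm{ord}_{p^{j}}(2)$ and pinning down exactly where the first $p$-factor appears is the only genuinely non-formal step; everything else is bookkeeping with the cyclic structure of $(\mathbb{Z}/p^n\mathbb{Z})^*$ and the CRT.
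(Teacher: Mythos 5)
Your proof is correct, and it takes a genuinely different route from the paper's. The paper argues geometrically: it lifts a cycle mod $p^2$ to mod $p^3$, observes that the lift projects onto the original cycle to get $L(p^3)\leqslant p\,L(p^2)$, and then asserts that the largest cycle must sweep out all $p$ ``rows'' so that equality holds, finishing by induction. You instead reduce to the closed form $L(p^n)=\mathrm{ord}_{u\,p^{n-1}}(2)$ with $u$ the largest odd divisor of $p-1$ (extending Remark \ref{rem:by-Dr.Timothy}), split off the $p$-part by the Chinese Remainder Theorem, and control $\mathrm{ord}_{p^{j}}(2)$ by lifting the exponent. What your route buys is decisive: it is fully rigorous where the paper's equality step (``the number of largest cycles must be $1$'') is only asserted, and it exposes that the proposition as stated is actually false in two cases. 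For $p=2$ one has $L(2^n)=1$ for all $n$ (as in the paper's own table), not $2^{n-2}L(4)=2^{n-2}$; and for a Wieferich prime such as $p=1093$, where $t=v_p\bigl(2^{p-1}-1\bigr)\geqslant 2$, one gets $\mathrm{ord}_{p^2}(2)=\mathrm{ord}_p(2)$ and hence $L(p^3)=\mathrm{lcm}\bigl(L(p),\mathrm{ord}_{p^2}(2)\bigr)=L(p^2)\neq p\,L(p^2)$, which is exactly where the paper's counting argument breaks down. Your proposed fixes, either adding the hypotheses that $p$ is odd and non-Wieferich, or recording the general formula $L(p^n)=\mathrm{lcm}\bigl(L(p),\,\mathrm{ord}_p(2)\,p^{\max(0,\,n-1-t)}\bigr)$, are the right way to make the statement true. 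The only small point to tidy is that your appeal to the cyclicity of $(\mathbb{Z}/p^n\mathbb{Z})^*$ already presupposes $p$ odd, so that hypothesis should appear explicitly from the start.
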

\begin{proof}

By using the same idea (as illustrated in Figure \ref{Fig:mod-p2}) in the proof of Proposition \ref{prop:3}, we know that if $x_i^2\equiv x_j\ (\mod p^2)$ and $x_i^2\equiv x'_j\ (\mod p^3)$, then $x'_j\equiv x_j\ (\mod p^2)$. Thus, following the arguments, we get $L(p^3)\leqslant p\cdot L(p^2)$. Note that the multiplication factor is $p$, not $p-1$.

If the largest cycle modulo $p^3$ is less than $pL(p^2)$. That is, the largest cycle doesn't take all of the elements in the area similar in Figure \ref{Fig:mod-p2}. Then, there must be another largest cycle modulo $p^3$. If use curve to describe the cycle, then the curve intersect each vertical line once. They all project onto the "line" $[0,p^2]$. And they are not intersect (i.e., have no common elements). Since $L(p^3)>L(p^2)$, the largest cycle's length must greater than $L(p^2)$. And note that there are $p$ rows($p$ is a prime number), then the number of largest cycles must be $1$. That is $L(p^3)=pL(p^2)$.

For the general cases, the idea is the same. By conduction we complete the proof.
\end{proof}

Finally, we have

\begin{prop}
Suppose $m$ is a composite number, and $m=p_{i_1}^{s_1}p_{i_2}^{s_2}\cdots p_{i_t}^{s_t}$. Here $p_{i_k}$ are all prime numbers. Then we have
\[
L(m)=L(p_{i_1}^{s_1}p_{i_2}^{s_2}\cdots p_{i_t}^{s_t})=\mathrm{lcm}\Bigl(L(p_{i_1}^{s_1}), L(p_{i_2}^{s_2}), \ldots, L(p_{i_t}^{s_t})\Bigr).
\]
\end{prop}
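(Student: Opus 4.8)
The plan is to use the Chinese Remainder Theorem to reduce the squaring dynamics on $\mathbb{Z}/m\mathbb{Z}$ to a product of the squaring dynamics on the prime-power factors, and then to track how cycle lengths combine. Write $\pi\colon\mathbb{Z}/m\mathbb{Z}\to\prod_{j}\mathbb{Z}/p_{i_j}^{s_j}\mathbb{Z}$ for the CRT isomorphism. The governing observation is that $\pi(x^2)=\bigl(\pi(x)\bigr)^2$ componentwise, so $\pi$ conjugates the map $x\mapsto x^2$ on the left to the product of the maps $y\mapsto y^2$ on the right. Consequently, writing $x^{(j)}:=x\bmod p_{i_j}^{s_j}$, an element $x$ lies on a cycle mod $m$ (i.e. $x^{2^k}\equiv x\ (\mod m)$ for some $k\geqslant 1$) if and only if every component $x^{(j)}$ lies on a cycle mod $p_{i_j}^{s_j}$.

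Next I would pin down the period. If $x^{(j)}$ lies on a cycle of length $\ell_j$ mod $p_{i_j}^{s_j}$, then, since squaring acts as a cyclic shift around that cycle, $(x^{(j)})^{2^k}\equiv x^{(j)}$ holds exactly when $\ell_j\mid k$. Hence $x^{2^k}\equiv x\ (\mod m)$ holds exactly when $\ell_j\mid k$ for all $j$, that is, when $\mathrm{lcm}_j(\ell_j)\mid k$. Therefore the cycle length of $x$ modulo $m$ is precisely $\mathrm{lcm}_j(\ell_j)$.

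The step I expect to be the main obstacle is a divisibility fact internal to a single prime power: \emph{every} cycle length $\ell$ modulo $p^s$ divides $L(p^s)$. This is what turns the right-hand lcm into an exact upper bound rather than merely a crude one, and it is where the structure of the unit group is essential. Following Remark \ref{rem:by-Dr.Timothy} and Theorem \ref{thm:by-Dr.Timothy}, a nonzero periodic point coprime to $p$ has an odd multiplicative order $e\mid\varphi(p^s)$ and cycle length $\mathrm{ord}_e(2)$, while the only other periodic point is $0$, of period $1$. Because $(\mathbb{Z}/p^s\mathbb{Z})^*$ is cyclic for odd $p$, such $e$ ranges over all odd divisors of $\varphi(p^s)$; letting $n$ be the largest odd divisor of $\varphi(p^s)$, one has $e\mid n$, and from $d\mid e\Rightarrow \mathrm{ord}_d(2)\mid\mathrm{ord}_e(2)$ (valid since $2^{\mathrm{ord}_e(2)}\equiv 1\ (\mod e)$ forces $\equiv 1\ (\mod d)$) we get $\mathrm{ord}_e(2)\mid\mathrm{ord}_n(2)=L(p^s)$. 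For $p=2$ the odd part of $\varphi(2^s)$ is $1$ and $L(2^s)=1$, so the claim is trivial there.

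Finally I would assemble the two inequalities. For the upper bound, any cycle mod $m$ has length $\mathrm{lcm}_j(\ell_j)$ with each $\ell_j\mid L(p_{i_j}^{s_j})$ by the previous step, so its length divides $\mathrm{lcm}_j L(p_{i_j}^{s_j})$, whence $L(m)\leqslant \mathrm{lcm}_j L(p_{i_j}^{s_j})$. For the lower bound, I would choose in each factor a periodic point of maximal cycle length $L(p_{i_j}^{s_j})$ (one exists by the characterization above) and lift the resulting tuple through $\pi^{-1}$; the lift is periodic because each component is, and by the second step its cycle length is exactly $\mathrm{lcm}_j L(p_{i_j}^{s_j})$, giving $L(m)\geqslant \mathrm{lcm}_j L(p_{i_j}^{s_j})$. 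The two bounds together yield the claimed equality.
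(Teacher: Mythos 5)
Your argument is correct, and it takes a genuinely different --- and considerably more complete --- route than the paper's. The paper's proof stays with the solution-counting device used throughout: for $m=p_i^{s_i}p_j^{s_j}$ it rewrites $x^{2^m}\equiv x$ as $x^{2^m-1}\equiv 1\ (\mod m)$ for $x$ coprime to $m$, invokes Lemma \ref{lem:bakkerbe} to count $\gcd\bigl(2^m-1,\varphi(p_i^{s_i})\varphi(p_j^{s_j})\bigr)$ solutions, and then simply asserts the $\lcm$ formula ``hence'' before appealing to induction; it never explains how a solution count for a fixed exponent yields the $\lcm$ of the two maximal cycle lengths, it does not treat periodic points sharing a factor with $m$ (which genuinely occur --- e.g.\ $22$ in the cycle $\{22,88\}$ modulo $99$), and it leans on Lemma \ref{lem:bakkerbe}(b) for a modulus with no primitive root, where the stated count $\gcd(d,\varphi(n))$ can fail. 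Your proof instead conjugates the squaring map through the CRT isomorphism, identifies the period of a point as the $\lcm$ of its componentwise periods, and supplies the key fact the paper's sketch silently needs: every cycle length modulo $p^s$ divides $L(p^s)$, obtained by identifying the cycle length of a unit of odd order $e$ with $\mathrm{ord}_e(2)$ and using the cyclicity of $(\mathbb{Z}/p^s\mathbb{Z})^*$ (with the non-unit periodic point $0$ and the case $p=2$ handled separately). Together with the explicit lifting of a tuple of maximal-length points for the lower bound, this gives both inequalities cleanly; what the CRT route buys is a complete and correct proof covering all residues, at the cost of needing the order-theoretic characterization of cycle lengths from Theorem \ref{thm:by-Dr.Timothy} and Remark \ref{rem:by-Dr.Timothy}, whereas the paper's route is shorter but, as written, is only an outline.
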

\begin{proof}
Let $m=p_{i}^{s_i}p_{j}^{s_j}$. Consider equation
\[
x^{2^m}\equiv x\ (\mod p_{i}^{s_i}p_{j}^{s_j}),
\]
where $(x,p_i)=(x,p_j)=1$. Then it is equivalent to the equation
\[
x^{2^m-1}\equiv 1\ (\mod p_{i}^{s_i}p_{j}^{s_j}).
\]
By Lemma \ref{lem:bakkerbe}, it has $\gcd(2^m-1, \varphi(p_{i}^{s_i}p_{j}^{s_j}))$ solutions.
\[
\varphi(p_{i}^{s_i}p_{j}^{s_j})=\varphi(p_{i}^{s_i})\varphi(p_{j}^{s_j})=(p_i-1)p_i^{s_i-1}\cdot (p_j-1)p_j^{s_j-1}.
\]
Hence, we have $L(p_{i}^{s_i}p_{j}^{s_j})=\mathrm{lcm}(L(p_{i}^{s_i}),L(p_{j}^{s_j}))$. By conduction we will complete the proof.
\end{proof}

We give a few examples.
\[
\begin{aligned}
 L(15)&=L(3\cdot 5)=1=\lcm(L(3),L(5))=\lcm(1,1),\\
 L(45)&=L(3^2\cdot 5)=2=\lcm(L(3^2),L(5))=\lcm(2,1),\\
L(135)&=L(3^3\cdot 5)=6=\lcm(L(3^3),L(5))=\lcm(6,1),\\
 L(75)&=L(3\cdot 5^2)=4=\lcm(L(3),L(5^2))=\lcm(1,4),\\
L(225)&=L(3^2\cdot 5^2)=4=\lcm(L(3^2),L(5^2))=\lcm(2,4),\\
L(675)&=L(3^3\cdot 5^2)=12=\lcm(L(3^3),L(5^2))=\lcm(6,4),\\
 L(375)&=L(3\cdot 5^3)=20=\lcm(L(3),L(5^3))=\lcm(1,20),\\
L(1125)&=L(3^2\cdot 5^3)=20=\lcm(L(3^2),L(5^3))=\lcm(2,20),\\
L(3375)&=L(3^3\cdot 5^3)=60=\lcm(L(3^3),L(5^3))=\lcm(6,20),\\
\end{aligned}
\]

\begin{remark}
For $L(p^2)$, the situations are complicated. We list the various formulae here.

\noindent(1) The first few primes obey the formula $L(p^2)=(p-1)L(p)$ are:
\[
2,3,5,29,179,293,317,\ldots
\]
(2) The first few primes obey the formula $L(p^2)=\frac{1}{2}(p-1)L(p)$ are:
\[
\begin{aligned}
&7,11,13,17,23,47,59,67,71,83,103,107,131,139,\\
&167,173,191,227,239,263,269,347,\ldots\\
\end{aligned}
\]
(3) The first few primes obey the formula $L(p^2)=\frac{1}{4}(p-1)L(p)$ are:
\[
53,61,97,113,149,193,349,\ldots
\]
(4) The first few primes obey the formula $L(p^2)=\frac{1}{6}(p-1)L(p)$ are:
\[
19,31,37,43,79,199,211,223,229,277,283,\ldots
\]
(5) The first few primes obey the formula $L(p^2)=\frac{1}{7}(p-1)L(p)$ are:
\[
197,\ldots
\]
(6) The first few primes obey the formula $L(p^2)=\frac{1}{8}(p-1)L(p)$ are:
\[
41,89,137,233,281,353,\ldots
\]
(7) The first few primes obey the formula $L(p^2)=\frac{1}{10}(p-1)L(p)$ are:
\[
311,\ldots
\]
(8) The first few primes obey the formula $L(p^2)=\frac{1}{12}(p-1)L(p)$ are:
\[
157,181,\ldots
\]
(9) The first few primes obey the formula $L(p^2)=\frac{1}{18}(p-1)L(p)$ are:
\[
127,271,307,\ldots
\]
(10) The first few primes obey the formula $L(p^2)=\frac{1}{20}(p-1)L(p)$ are:
\[
101,\ldots
\]
(11) The first few primes obey the formula $L(p^2)=\frac{1}{24}(p-1)L(p)$ are:
\[
73,313,\ldots
\]
(12) The first few primes obey the formula $L(p^2)=\frac{1}{40}(p-1)L(p)$ are:
\[
241,\ldots
\]
(13) The first few primes obey the formula $L(p^2)=\frac{1}{48}(p-1)L(p)$ are:
\[
337,\ldots
\]
(14) The first few primes obey the formula $L(p^2)=\frac{1}{50}(p-1)L(p)$ are:
\[
151,\ldots
\]
(15) The first few primes obey the formula $L(p^2)=\frac{1}{54}(p-1)L(p)$ are:
\[
109,163,\ldots
\]
(16) The first few primes obey the formula $L(p^2)=\frac{1}{110}(p-1)L(p)$ are:
\[
331,\ldots
\]
(17) The first few primes obey the formula $L(p^2)=\frac{1}{250}(p-1)L(p)$ are:
\[
251,\ldots
\]

Although we do not sure whether there are some definite laws about $L(p^2)$, some interesting phenomenons should be noted. If write $L(p^2)=\frac{1}{k}(p-1)L(p)$, then the differences between the adjacent primes in the list of each cases are all divisible by the corresponding $k$.
\end{remark}

%
%


\bigskip

\noindent{\bf Acknowledgments :}
We would like to express our gratitude to Professor Vilmos Komornik who invite the author to visit the Department of Mathematics in University of Strasbourg. This work is done during the stay of the author in Strasbourg.

We also express our gratitude to Dr. Timothy Foo (a researcher in School of Computing, National University of Singapore, Email: dcsfcht@nus.edu.sg) for sharing his finding. He gives the Theorem \ref{thm:by-Dr.Timothy} with the proof and the Remark \ref{rem:by-Dr.Timothy}.

We thank Professor Vilmos Komornik and Dr. Timothy Foo for discussing this question. And thanks Michel Marcus for giving me the link of OEIS A037178.


\bigskip

\noindent Haifeng Xu\\
School of Mathematical Sciences\\
Yangzhou University\\
Jiangsu China 225002\\
hfxu@yzu.edu.cn\\
\medskip



\begin{thebibliography}{10}

\bibitem{Quadratic_residue}
{\em https://en.wikipedia.org/wiki/Quadratic\_residue}.

\bibitem{FermatPrime}
{\em http://mathworld.wolfram.com/FermatPrime.html}

\bibitem{OEIS-A019434}
{\em https://oeis.org/A019434}

\bibitem{OEIS-A037178}
{\em https://oeis.org/A037178}

\bibitem{K-L-S_2001}
{M. Krizek, F. Luca and L. Somer, {\em 17 Lectures on Fermat Numbers – From Number Theory to Geometry}, Springer-Verlag, New York, 2001.}

\bibitem{Tsang}
{\em Cindy Tsang, Fermat Numbers. 2010.}

\bibitem{bakkerbe}
{Benjamin Bakker, PRIMITIVE ROOTS,
{\em http://www2.mathematik.hu-berlin.de/\~bakkerbe/math248/primitive.pdf}}

\bibitem{Blanton-Hurd-McCranie}
{E. L. Blanton, Jr., S. P. Hurd and J. S. McCranie, {\em On a digraph defined by squaring modulo n, Fibonacci Quart.} 30 (Nov. 1992), 322--333.}

\end{thebibliography}
\end{document}